\newtheorem{Thm}{Theorem}[section] 
\newtheorem{Cor}[Thm]{Corollary}
\newtheorem{Cor.Conj}[Thm]{Corollary of Conjecture}
\newtheorem{Prop}[Thm]{Proposition}
\newtheorem{Conj}[Thm]{Conjecture}
\newtheorem{Ques}[Thm]{Question}
\newtheorem{Claim}[Thm]{Claim}
\theoremstyle{remark}
\newtheorem{Rem}[Thm]{Remark}
\newtheorem{Ex}[Thm]{Example}
\theoremstyle{definition}
\newtheorem{Def}[Thm]{Definition}
\newtheorem{Step}{Step}
\newcommand{\R}{\ensuremath{\mathbb{R}}}
\newcommand{\Q}{\mathbb{Q}}
\begin{document}
\nocite{GW}

\title[Semitoric vs log minimal models vs weak K-moduli]{Semi-toric and toroidal compactifications \\ 
as log minimal models, and applications to \\ weak K-moduli}
\author{Yuji Odaka}
\date{\today}

\maketitle

\begin{abstract}
We give a characterization of toroidal (resp., semi-toric) compactifications due to \cite{AMRT} (resp., \cite{Loo1, Loo2}) 
as log minimal models and apply it to study weak K-moduli compactifications, giving a different proof to a theorem of Alexeev-Engel. 
We also discuss towards further generalization, in particular revisit Shah-Sterk compactification of 
moduli of Enriques surfaces to show compatibility with log K-stability. 
\end{abstract}


\section{Introduction}

We discuss compactifications of 
moduli of polarized K-trivial varieties $(X,L)$, 
which admit a description as locally symmetric spaces due to Torelli type theorems. 
This includes the case when $X$s are 
abelian varieties, K3 surfaces or more generally hyperK\"ahler varieties. 
In particular, we are motivated by semi-classical question to compare two kinds of compactifications; 
\begin{enumerate}
\item \label{cptf.i}
on one hand, a certain kind of compactification whose boundary parametrizes degenerate varieties, 
which we called 
``weak K-moduli", or its normalizations if necessary, 
\item \label{cptf.ii}
toroidal compactification (\cite{AMRT}) or semitoric compactifications (\cite{Loo1, Loo2, Loo3, Loo4}) on the other hand. 
\end{enumerate}

Recall that the former {\it weak K-moduli} means the moduli algebraic stacks of 
polarized semi-log-canonical (slc, for short) K-trivial varieties and their 
coarse moduli (cf., \cite{logmin}). ``K-" of K-moduli comes from K-(semi)stability, hence ``K"\"ahler originally, rather than canonical divisor $K_X$ (\cite{Od10}). 
Indeed, by \cite{Od, Od2}, such polarized varieties are characterized by K-semistability, 
if the canonical divisor is numerically trivial. 

One recent breakthrough for K3 surfaces in this direction is achieved in the paper of Alexeev-Engel \cite{AE} as a general statement (also cf., \cite{GHKS}), 
and its related papers \cite{ABE, AET, AE2, AEH} show explicit examples. Here, we attempt to 
improve and generalize parts of {\it op.cit}. 

\vspace{8mm}

On the way, as one of our main theorems (Theorem \ref{Loo.bir}), we also prove a birational characterization of toroidal compactification 
and semitoric compactification of Looijenga as {\it log minimal models} in the sense of the recent (log) minimal model program. 
The fact that it is log minimal is essentially semi-classical \cite{Mum77} while our point is to show the other direction. 

Then, we apply it to prove a variant of \cite[Main theorem 1.2]{AE} which also depends on our previous work \cite{logmin}. 
Note that {\it op.cit} depends on a variant of cone theorem \cite{Sva}, as well as admissible variation of mixed Hodge structures. 

\vspace{4mm}

To start with the original inspiration, we review the achievement of \cite{AE}. 
They discuss the $19$-dimensional quasi-projective coarse moduli 
$$F_{2d}=\{(X,L)\mid  (L^2)=2d\}/\sim,$$
where $d\in \mathbb{Z}_{>0}$, $X$s are K3 surfaces possibly with ADE singularities, 
$L$s are their primitive ample line bundles of degree $2d$, 
$\sim$ is the natural equivalence relation as isomorphisms of $X$ preserving $L$. 
Recall the fundamental result that it is biholomorphic to 
a Hermitian locally symmetric space of orthogonal type as follows. 
For the K3 lattice $\Lambda_{\rm K3}:=U^{\oplus  3}\oplus E_8(-1)^{\oplus 2}$ and its element $\lambda_{2d}:=e+df$ of norm $2d$, 
we set $\Lambda_{2d}:=\lambda_{2d}^{\perp}\subset \Lambda_{K3}$. Then 
we write  
$$\Omega(\Lambda_{2d}):=\{[w]\in \mathbb{P}(\Lambda_{2d}\otimes \mathbb{C})\mid
 (w,w)=0,\ (w,\bar{w})>0\},$$
 let 
$O(\Lambda_{\rm K3})$ be the orthogonal 
group of the lattice $\Lambda$ i.e., automorphisms preserving the bilinear form, 
and denote its stable subgroup as 
$$\tilde{O}(\Lambda_{2d}):=\{g|_{\Lambda_{2d}} : g\in O(\Lambda_{\rm K3}),\, 
g(\lambda)=\lambda\}.$$ Then, there is an isomorphism due to \cite{PS.Sha, Tod.surj}; 
$$
F_{2d}
\simeq
\tilde{O}(\Lambda_{2d})\backslash \Omega(\Lambda_{2d}). 
$$
This identification allows us to apply various compactification theories of \eqref{cptf.ii} side. 
Also, by using local Weyl group quotients, the above description can be enhanced to a description of the corresponding moduli stack as a 
Deligne-Mumford stack $\mathcal{F}_{2d}$ which underlies the universal family 
$$\pi_{2d}\colon (\mathcal{U}_{2d},\mathcal{L}_{2d})\twoheadrightarrow \mathcal{F}_{2d}.$$
To talk about \eqref{cptf.i} side, \cite{AE} among other related references use the {\it log KSBA} (Koll\'ar-ShepherdBarron-Alexeev) 
theory, which depends on the log minimal model program (MMP, for short). 
That is, for some general family of relative Cartier divisors $\mathcal{D}\subset \mathcal{U}_{2d}$ 
such that $(\mathcal{D}|_{X}) \in |m\mathcal{L}|_X|$ for any $(X,L)\in \mathcal{F}_{2d}$ with uniform $m\in \mathbb{Z}_{>0}$, 
we apply the theory of log KSBA (cf., \cite{She, AET} etc) to $(\mathcal{U}_{2d},c\mathcal{D})$ for some $(0<)\epsilon\ll 1$. 
Loosely speaking, it takes the so-called relative log canonical models of $(\overline{\mathcal{U}_{2d}},c\overline{\mathcal{D}})$ which is some extension 
of $(\mathcal{U}_{2d},c\mathcal{D})$, allowing certain base changes. This relative lc model satisfies certain canonicity. 
Hence, in the log KSBA theory, such arguments ensure the existence of 
a proper Deligne-Mumford stack  with projective coarse moduli $\overline{F_{2d}}^{\mathcal{D}}$ (\cite{Fjn, Kov}). 
The main new definition of \cite{AE} is the following. 
We closely follow the original expression and leave the actual necessary property 
in 
\begin{Def}[{\cite[Definition 6.2]{AE}}]
A family of divisors $\mathcal{D}$ is called {\it recognizable} if for any Kulikov model $X$ and its smoothing $\mathcal{X}$ which admits a primitive 
polarization $\mathcal{L}$ of degree $2d$, the limit divisor of $\mathcal{D}$ in $X$ is 
independent of $\mathcal{X}$ 
(in a certain weak sense
\footnote{in the sense of Claim \ref{kul}\eqref{rec} in the next section \S \ref{sec.AE}}). 
\end{Def}

\begin{Thm}[{\cite[Theorem 1.2]{AE}}]\label{AErecog}
If $\mathcal{D}$ is recognizable then, for the obtained compactified moduli of K3 surfaces $\overline{\mathcal{F}_{2d}}^{\mathcal{D}}$, 
its normalization $(\overline{\mathcal{F}_{2d}}^{\mathcal{D}})^{\nu}$ 
is the semitoric compactification for some semifan $\Sigma$ (\cite{Loo4}). 
\end{Thm}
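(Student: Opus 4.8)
The plan is to deduce Theorem \ref{AErecog} from the converse (i.e., the new) direction of our birational characterization Theorem \ref{Loo.bir}, rather than from a cone theorem \cite{Sva} and admissible variations of mixed Hodge structures as in \cite{AE}: we exhibit $(\overline{\mathcal{F}_{2d}}^{\mathcal{D}})^{\nu}$ as a log minimal model over the Baily--Borel compactification $F_{2d}^{BB}$ in the precise sense required by Theorem \ref{Loo.bir}, and then read off from that theorem that it coincides with the semitoric compactification for a semifan $\Sigma$.

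First I would recall that, by the log KSBA theory applied to $(\overline{\mathcal{U}_{2d}}, c\overline{\mathcal{D}})$ with $0 < c \ll 1$, the moduli $\overline{\mathcal{F}_{2d}}^{\mathcal{D}}$ is a proper Deligne--Mumford stack with projective coarse moduli, and its normalization is again proper with projective coarse moduli; write $\overline{F}$ for the normal projective coarse space of $(\overline{\mathcal{F}_{2d}}^{\mathcal{D}})^{\nu}$, which contains $F_{2d} \simeq \tilde{O}(\Lambda_{2d})\backslash \Omega(\Lambda_{2d})$ as a dense open subset. The Hodge (automorphic) $\mathbb{Q}$-line bundle $\lambda$ on $F_{2d}$ extends to the ample $\mathbb{Q}$-line bundle of $F_{2d}^{BB}$ by Baily--Borel, so the task reduces to showing that the natural rational map $\overline{F} \dashrightarrow F_{2d}^{BB}$ is in fact a morphism $p$, and that $\overline{F}$, equipped with the small boundary $\mathbb{Q}$-divisor $\Delta$ inherited from $c\overline{\mathcal{D}}$, is a relative log canonical (hence log minimal) model over $F_{2d}^{BB}$ of the type handled by Theorem \ref{Loo.bir}.

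The recognizability hypothesis is exactly what makes this work. By Claim \ref{kul}\eqref{rec}, for a Kulikov (type II or III) model the limit divisor of $\mathcal{D}$ is, in the stated weak sense, independent of the smoothing $\mathcal{X}$ and its primitive polarization $\mathcal{L}$; combined with \cite{logmin}, which characterizes polarized slc K-trivial degenerations by K-semistability and ties them to the MMP, this forces the limiting slc pair $(X,cD)$ attached to a boundary point of $\overline{\mathcal{F}_{2d}}^{\mathcal{D}}$ to be determined by the monodromy-invariant Hodge-theoretic data of the degeneration alone (up to the finite ambiguity recorded in Claim \ref{kul}). Hence the period map extends to a morphism $p\colon \overline{F}\to F_{2d}^{BB}$ which is quasi-finite — a boundary point of $\overline{F}$ over a given point of $F_{2d}^{BB}$ carries a canonically pinned-down slc pair, so the fibers are finite — and therefore finite by properness of $\overline{F}$. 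The canonicity built into the relative log canonical models of $(\overline{\mathcal{U}_{2d}}, c\overline{\mathcal{D}})$ then yields that $p^{*}\lambda$ together with $\Delta$ makes $(\overline{F},\Delta)$ log canonical with $p$-nef log canonical class, i.e., a relative log minimal model over $F_{2d}^{BB}$. Crucially, one only recovers a \emph{semifan} rather than a full fan, because slc stability of $(X,cD)$ retains just the coarser combinatorial information of the degeneration (type II/III strata up to the relevant contractions), so a priori some boundary strata of a toroidal model get collapsed in $\overline{F}$; the specific $\Sigma$ is then dictated by which toroidal strata remain separated, hence by $\mathcal{D}$. Applying Theorem \ref{Loo.bir} gives $\overline{F} = \overline{(\tilde{O}(\Lambda_{2d})\backslash\Omega(\Lambda_{2d}))}^{\Sigma}$, and enhancing to the stack level via the local Weyl-group quotients used to define $\mathcal{F}_{2d}$ gives the statement.

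The main obstacle I anticipate is precisely the core of the previous paragraph: showing that recognizability, mediated by \cite{logmin}, genuinely prevents the KSBA limit at a boundary point from encoding anything beyond the limiting mixed Hodge structure (so that $p$ cannot have positive-dimensional fibers and $\overline{F}$ does dominate $F_{2d}^{BB}$), and then matching the singularity and positivity hypotheses of Theorem \ref{Loo.bir} with what the relative log canonical model literally produces — in particular verifying that the boundary strata of $\overline{F}$ assemble into a semifan and that no additional small modification or contraction is required. By contrast, the bookkeeping items — normality, the codimension of the boundary, density of $F_{2d}$, and the passage between coarse space and Deligne--Mumford stack — are routine.
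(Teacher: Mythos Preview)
Your proposal has a genuine gap, centered on the claim that the period map $p\colon \overline{F}\to F_{2d}^{BB}$ is quasi-finite.  This is false, and it is not what you need.  A semitoric compactification (which is what you are trying to conclude $\overline{F}$ is) in general has positive-dimensional fibers over boundary strata of the Satake--Baily--Borel compactification: for degree-$2d$ K3 surfaces the SBB boundary has dimension at most $1$, while the boundary of $\overline{F}$ is typically a divisor in a $19$-dimensional variety.  If $p$ were finite and birational between normal varieties it would be an isomorphism, forcing $\overline{F}=F_{2d}^{BB}$, which contradicts the known examples (\cite{AET}, \cite{ABE}) where $\overline{F}$ is a nontrivial semitoric or toroidal blowup of SBB.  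The sentence ``a boundary point of $\overline{F}$ carries a canonically pinned-down slc pair, so the fibers are finite'' conflates two different things: the slc limit is determined by the full limiting mixed Hodge structure together with its monodromy data, not by the image point in $F_{2d}^{BB}$ alone, and it is precisely the variation of that extra data that makes the fibers of $p$ positive-dimensional.

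There is a second confusion: the $\Delta$ appearing in Theorem \ref{Loo.bir} is the reduced boundary $\overline{F}\setminus F_{2d}$ together with the branch divisors $\frac{d_i-1}{d_i}\overline{B_i}$ of the quotient $D\to\Gamma\backslash D$; it has nothing to do with $c\overline{\mathcal{D}}$, which lives on the universal family, not on the moduli space.  Consequently your verification that $(\overline{F},\Delta)$ is a log minimal model is missing its content.  The paper supplies this in two ways.  In \S\ref{sec.AE} it proves (Claim \ref{K.A}, via the $\lambda$-families of Claim \ref{kul}) that some toroidal compactification $\overline{F_{2d}}^{\Sigma}$ maps onto $\overline{F}$, so $\overline{F}$ is sandwiched between a toroidal model and SBB; then log-crepancy with SBB (\cite{Mum77}) gives log minimality, and Theorem \ref{Loo.bir} finishes.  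Alternatively, Corollary \ref{Loo.bir2} deduces log minimality from \cite[3.2, 3.6]{logmin}, but this requires the admissible VMHS hypothesis \eqref{vmhs}, which in turn is supplied by the same $\lambda$-families.  You explicitly say you want to avoid both \cite{Sva} and the VMHS input, yet you offer no replacement for establishing log minimality; the appeal to ``canonicity built into the relative log canonical models'' does not do this work, since that canonicity is a statement about the fibers of the universal family, not about the birational geometry of the base.
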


We briefly outline this paper. 
In section \S \ref{sec.AE}, we sketch the proof of above Theorem \ref{AErecog} 
after slight simplification. In the next section \S \ref{sec.semitoric}, we give more general statement on the 
characterization of semitoroidal compactifications as log minimal models. 
Combining with our previous \cite{logmin}, 
this gives fairly different proof from the original proof of \cite[1.2]{AE}. The last section discusses a question on generalizing these 
semitoroidality phenomenon. In particular, we study polarized Enriques surfaces of degree $2$ after 
\cite{Shah.Enriques} and \cite{Sterk.I, Sterk.II}, showing compatibility with log K-stability viewpoint \cite{ADL0}.

\section{Reviewing sketch of Alexeev-Engel theorem \cite{AE}}\label{sec.AE}

In this section, we review the sketch of the original proof of \cite[1.2]{AE}($=$Theorem \ref{AErecog}) 
following my survey talk at the Kinosaki symposium in October 2021 (\cite{Kinosaki}). 
This section may hopefully serve as a (not-self-contained) survey on the breakthrough \cite{AE} and related papers. 

We focus on the proof of Theorem \ref{AErecog}($=$\cite[1.2]{AE}). 
Note that we make a slight simplification of the argument of {\it op.cit}, in Step \ref{step:AMRT.KSBA} in particular, 
while also occasionally mentioning how to obtain related (often more explicit) results. 
Our following explanation depends on version 2 of \cite{AE}. 

\begin{proof}[Sketch proof of Theorem \ref{AErecog}, with slight simplification]

We separate into several steps. 

\begin{Step}
The first crucial step is to construct a certain Kuranishi family of Kulikov degenerations, 
as indeed \cite{ABE, AET, AE, AE2, AEH} all depend on Kulikov degenerations. 
More precise statement in \cite{AE} is as follows: 
take any isotropic vector $e\in \Lambda_{2d}$ with $I:=\mathbb{Z}e$, 
and any primitive element $\lambda\in e^{\perp}(\subset \Lambda_{2d})/I$ with $\lambda^2\ge 0$, 
which we call a monodromy vector. 

Then note that \cite{AMRT} gives a partial toroidal compactification $\overline{(\mathcal{F}_{2d})}^{\lambda}$ 
and its coarse moduli $\overline{F_{2d}}^{\lambda}$ with the boundary divisor $\Delta_{\lambda}$. 
Take a small enough analytic open neighborhood $S_{\lambda}$ of $\Delta_{\lambda}$ in a chart of 
$\overline{\mathcal{F}_{2d}}^{\lambda}$. Then the families to be used are as follows: 

\begin{Claim}\label{kul}
\begin{enumerate}
\item \label{kul1}
(\cite[5.3 $+$ \S 5B, \S 5E, \S 5F]{AE})
There is a flat proper family $\pi\colon \mathcal{X}\to S_{\lambda}$ from a smooth $\mathcal{X}$, 
which restricts to a topologically locally trivial family of simple normal crossing (SNC, for short) K-trivial surfaces on $\Delta_{\lambda}$. 
\item \label{rec}
The recognizability condition of $\mathcal{D}$ ensures that 
the closure of the divisor family  $\mathcal{D}|_{(\mathcal{X}\setminus \pi^{-1}\Delta_{\lambda})}$ 
in $\mathcal{X}$ is a flat family of curves over $S_{\lambda}$. We simply denote it by $\mathcal{D}|_{\mathcal{X}}$ as (hopefully harmless) abuse of notation. 
\item \label{kul2}
(\cite[Cor. 6.11]{AE})
Furthermore, for a given recognizable divisor family $\mathcal{D}$, we can (re-)take $\mathcal{X}$ by base change and Atiyah-type flops, so that 
$\mathcal{D}|_{\mathcal{X}}$ does not contain any strata of 
the SNC degeneration $\pi^{-1}(s)$ for any $s\in \Delta_{\lambda}$. 
\end{enumerate}
\end{Claim}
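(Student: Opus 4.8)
The plan is to establish the three parts in order, giving a self-contained argument for \eqref{rec} (which carries no citation) and, for \eqref{kul1} and \eqref{kul2}, laying out the skeleton of the \cite{AE} construction I would invoke.

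For \eqref{kul1} the goal is to reconstruct a Kulikov degeneration from the toroidal boundary data $(e,\lambda)$. Over the punctured neighbourhood $S_{\lambda}\setminus\Delta_{\lambda}$, surjectivity of the period map (\cite{Tod.surj}) together with the global Torelli theorem for K3 surfaces (\cite{PS.Sha}) provides --- after passing to a finite cover trivializing the relevant Weyl-group action --- a family of $\Lambda_{2d}$-polarized K3 surfaces whose monodromy around $\Delta_{\lambda}$ is the prescribed unipotent transformation cut out by $(e,\lambda)$. One then extends this family across $\Delta_{\lambda}$: by semistable reduction there is a base change after which the family is semistable, hence SNC, and by the Kulikov--Persson--Pinkham procedure a sequence of flops and blow-downs of the (smooth) total space produces a Kulikov model, that is, a semistable degeneration with trivial relative dualizing sheaf. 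Topological local triviality along $\Delta_{\lambda}$ then follows because the monodromy weight filtration and the combinatorial type of the degeneration are constant along $\Delta_{\lambda}$ by construction, so the Clemens--Schmid sequence forces the special fibres to share a topological type, which one promotes to genuine local triviality by an Ehresmann-type argument on the ambient family. This is where I expect the real difficulty to lie: the base changes occurring in semistable reduction must be arranged uniformly over $S_{\lambda}$, and upgrading a fibrewise-constant topological type to honest local triviality is not formal; I would simply quote \cite[5.3, \S 5B, \S 5E, \S 5F]{AE} here.

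For \eqref{rec} the assertion is a flatness statement amenable to miracle flatness (a Cohen--Macaulay scheme over a regular base all of whose fibres have the same dimension is flat). The closure $\overline{\mathcal{D}}$ of $\mathcal{D}|_{\mathcal{X}\setminus\pi^{-1}\Delta_{\lambda}}$ is an effective Cartier divisor on the smooth --- hence Cohen--Macaulay --- total space $\mathcal{X}$, so $\overline{\mathcal{D}}$ is itself Cohen--Macaulay of pure codimension one; since $S_{\lambda}$ may be taken regular, it then suffices to check that every fibre of $\overline{\mathcal{D}}\to S_{\lambda}$ has the dimension of the generic one, i.e. is a curve, equivalently that $\overline{\mathcal{D}}$ contains no irreducible component of any $\pi^{-1}(s)$. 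Over $S_{\lambda}\setminus\Delta_{\lambda}$ this is clear, since there $\mathcal{D}$ meets each fibre in a curve of $|m\mathcal{L}|$; over $s\in\Delta_{\lambda}$ it is precisely the recognizability hypothesis --- restricting along a one-parameter disc through $s$ in $S_{\lambda}$ yields a one-parameter smoothing of the Kulikov fibre over $s$, and by definition of a recognizable $\mathcal{D}$ the limit of $\mathcal{D}$ along such a disc is a well-defined curve in the appropriate linear system, independent of the smoothing, so the fibre of $\overline{\mathcal{D}}$ over $s$ is one-dimensional. The only point needing care is to match the scheme-theoretic fibre of $\overline{\mathcal{D}}$ with this limit divisor, which one reads off from the valuative description of the closure.

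For \eqref{kul2} one further modifies $\mathcal{X}$, by base change and Atiyah flops, so that $\overline{\mathcal{D}}$ avoids every stratum --- double curve or triple point --- of $\pi^{-1}(s)$ for $s\in\Delta_{\lambda}$. The plan is a general-position argument supported over $\Delta_{\lambda}$: after a base change making the relevant multiplicity and the monodromy sufficiently divisible, choose the Kulikov model so that $\overline{\mathcal{D}}$ meets the double locus transversally wherever possible, and wherever $\overline{\mathcal{D}}$ still passes through a stratum, perform a finite sequence of Atiyah flops of the total space --- all supported over $\Delta_{\lambda}$, hence changing neither the generic fibres nor the polarization --- that slide $\overline{\mathcal{D}}$ off that stratum. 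Recognizability is again what makes these local choices consistent across all of $\Delta_{\lambda}$, the limit divisor being intrinsic. The substantive content --- which flops to perform and that finitely many suffice --- is \cite[Cor.~6.11]{AE}, which I would invoke rather than reprove.
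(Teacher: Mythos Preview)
Your treatment of \eqref{rec} via miracle flatness is clean and essentially what underlies the claim; likewise your sketch for \eqref{kul2} matches the paper's description (globalization of Atiyah flops and base changes after \cite[2.9]{Laza2}).

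The substantive discrepancy is in \eqref{kul1}. Your skeleton runs \emph{inward}: start with the polarized family over $S_\lambda\setminus\Delta_\lambda$ furnished by Torelli/surjectivity, then extend across $\Delta_\lambda$ by semistable reduction followed by Kulikov--Persson--Pinkham. The paper's account of \cite{AE} runs \emph{outward}: first build $\mathcal{X}|_{\Delta_\lambda}$ explicitly from the known combinatorics of components of Kulikov surfaces (types II and III), then use d-semistable smoothing deformation theory (\cite{dss, Fri, FriS}) to thicken this into a family over $S_\lambda$, and finally restrict to the polarized locus. You flag the difficulty yourself --- semistable reduction and KPP are one-parameter procedures, and arranging them uniformly over a $19$-dimensional base is not standard --- but this is not a technicality to be patched by citation: it is precisely why \cite{AE} proceeds in the opposite direction. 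Building the boundary family first and smoothing it gives topological local triviality along $\Delta_\lambda$ essentially for free, since the combinatorial type is fixed by construction rather than deduced after the fact from Clemens--Schmid. So while your citation to \cite[5.3, \S5B, \S5E, \S5F]{AE} is correct, the outline you attach to it misrepresents the actual argument; if you keep the citation, replace the surrounding sketch with the construct-then-smooth strategy.
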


These are polarized restrictions of what the authors of {\it op.cit} calls  $\lambda$-family and divisorial $\lambda$-family respectively. 
The keys to the proof of Claim \ref{kul} in \cite{AE} is as follows. 
For \ref{kul} \eqref{kul1}, first $\mathcal{X}|_{\Delta_{\lambda}}$ is constructed using the well-known explicit determination of 
components of Kulikov degenerations ({\it op.cit} \S 5B for type III case and  \S 5E for type II case, in un-polarized setup). Then, the next step  
uses gluing of  d-semistable smoothing deformations of 
Kulikov surfaces \cite{dss, Fri, FriS}. Then we restrict the obtained family to a polarized one in {\it op.cit} \S 5F. 
Further, to show \ref{kul} \eqref{kul2}, the authors use globalization of careful composition of Atiyah flops and base changes after \cite[2.9]{Laza2}. 
\end{Step}



\begin{Rem} 
In \cite{AET, ABE}, the above type family is constructed explicitly 
and the corresponding monodromy is explicitly calculated. This is the key 
to their explicit determination of the semi-fans describing the compactification. 
In {\it op.cit}, the monodromy calculation depends on the ``bare hands" geometric arguments in 
symplectic topology, after Symington \cite{Sym} and Engel-Friedman \cite{EF}.  
\end{Rem}



\begin{Step}
Another ingredient is to show that type III strata of the log KSBA compactification do {\it not}  
contain complete curves, or even quasi-affine. This steps are explained in 
each of \cite{AET, ABE, AE}. 

In the case of \cite{AET}, the irreducible components 
log KSBA limits are explicitly 
described depending on the earlier works of \cite{AT, AT2} on 
the log Calabi-Yau surfaces with involutions.

Similarly, in the case of \cite{ABE}, 
the irreducible components ar explicitly described though the 
components are sometimes slightly different from those appeared 
in \cite{AT, AT2}. 

For proving the general existence statement \cite[1.2]{AE}, although it is written in {\it loc.cit}, 
actually this step is avoidable as the following argument in 
the next step \ref{step:AMRT.KSBA} shows. 
\end{Step}


\begin{Step}\label{step:AMRT.KSBA}

Now we wish to identify the normalization of the log KSBA compactification with toroidal or 
semitoric compactification. It is known to expert that 
the normalization of the log KSBA compactification, as the singularities of the degenerate surfaces are semi-log-canonical (hence, DuBois), 
dominates the Satake-Baily-Borel compactification (cf., e.g., \cite[3.15]{AET}). 
Hence, the next step is to show the following claim: 
\begin{Claim}\label{K.A}
For the above log KSBA compactification 
$\overline{\mathcal{F}_{2d}}^{\mathcal{D}}$, there is a toroidal compactification $\overline{\mathcal{F}_{2d}}^{\Sigma}$ (\cite{AMRT}) 
for some fan $\Sigma$ such that the birational map 
$\overline{\mathcal{F}_{2d}}^{\Sigma}
\to \overline{\mathcal{F}_{2d}}^{\mathcal{D}}$ is a morphism. 
\end{Claim}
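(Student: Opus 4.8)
The plan is to use the universal property of toroidal compactifications together with the valuative criterion, exploiting the Kulikov family $\pi\colon\mathcal{X}\to S_\lambda$ of Claim \ref{kul}. First I would recall the standing fact (already noted in Step \ref{step:AMRT.KSBA}) that the normalization of $\overline{\mathcal{F}_{2d}}^{\mathcal{D}}$ dominates the Satake--Baily--Borel compactification $\overline{\mathcal{F}_{2d}}^{\mathrm{BB}}$, via a morphism $q$. Since any toroidal compactification $\overline{\mathcal{F}_{2d}}^{\Sigma}$ also maps to $\overline{\mathcal{F}_{2d}}^{\mathrm{BB}}$, and both are proper and birational to $\mathcal{F}_{2d}$, to produce the desired morphism $\overline{\mathcal{F}_{2d}}^{\Sigma}\to\overline{\mathcal{F}_{2d}}^{\mathcal{D}}$ it suffices — after passing to normalizations and using that a birational map between normal proper varieties over $\overline{\mathcal{F}_{2d}}^{\mathrm{BB}}$ which is defined in codimension one and has no indeterminacy along divisors extends to a morphism — to show that for a sufficiently fine fan $\Sigma$ the induced rational map does not contract any divisor the wrong way, equivalently that the map $\overline{\mathcal{F}_{2d}}^{\Sigma}\dashrightarrow\overline{\mathcal{F}_{2d}}^{\mathcal{D}}$ is a morphism along each boundary divisor. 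The boundary divisors of $\overline{\mathcal{F}_{2d}}^{\Sigma}$ over a given cusp are indexed by rays in $\Sigma$, i.e. by monodromy vectors $\lambda$ as in Step 1.

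The key step is then local: fix a rank-one cusp (isotropic line $I=\mathbb{Z}e$) and a monodromy vector $\lambda$, giving the partial toroidal compactification $\overline{\mathcal{F}_{2d}}^{\lambda}$ with boundary divisor $\Delta_\lambda$ and the analytic neighborhood $S_\lambda$. By Claim \ref{kul}\eqref{kul1} we have a flat family $\pi\colon\mathcal{X}\to S_\lambda$ of K-trivial surfaces which on $\Delta_\lambda$ is a topologically locally trivial family of Kulikov (SNC, K-trivial) surfaces, and by Claim \ref{kul}\eqref{kul2}, after base change and Atiyah flops, the divisor family $\mathcal{D}|_{\mathcal{X}}$ meets no stratum of any fiber $\pi^{-1}(s)$, $s\in\Delta_\lambda$. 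Consequently the pair $(\pi^{-1}(s),\, c\,\mathcal{D}|_{\pi^{-1}(s)})$ is an slc K-trivial polarized pair for $0<c\ll 1$, so it is already a relative log canonical model in the sense used to construct $\overline{\mathcal{F}_{2d}}^{\mathcal{D}}$; by the canonicity built into the log KSBA construction this identifies $\overline{\mathcal{F}_{2d}}^{\mathcal{D}}$ near the image of $\Delta_\lambda$ with the quotient of $S_\lambda$ by the relevant finite group action, i.e. with an open part of $\overline{\mathcal{F}_{2d}}^{\lambda}$. This gives a morphism from a neighborhood of $\Delta_\lambda$ in $(\overline{\mathcal{F}_{2d}}^{\Sigma})^{\nu}$ onto a neighborhood of the corresponding boundary stratum of $(\overline{\mathcal{F}_{2d}}^{\mathcal{D}})^{\nu}$, provided $\Sigma$ is chosen to contain the ray $\mathbb{R}_{\ge 0}\lambda$; running over all rank-one cusps and all monodromy vectors appearing, and then refining $\Sigma$ so as to also resolve the higher-rank cusps compatibly (as in \cite{AMRT, Loo4}), glues these into a global morphism $\overline{\mathcal{F}_{2d}}^{\Sigma}\to\overline{\mathcal{F}_{2d}}^{\mathcal{D}}$ once one checks it is a morphism in codimension one and applies properness/normality to extend. (That the fan $\Sigma$ can be taken rational polyhedral and $\tilde O(\Lambda_{2d})$-admissible is a standard input.)

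The main obstacle I expect is precisely the matching of the \emph{limit} objects: one must show that the slc surface-pair produced by the log KSBA machinery over a point of $\Delta_\lambda$ coincides — not merely is birational to, but equals, after normalization — with the fiber $\pi^{-1}(s)$ of the Kulikov family (with its limit divisor), so that the two compactifications genuinely agree and not just dominate a common one. This is where recognizability of $\mathcal{D}$ is used in an essential way: it is what makes the limit divisor, hence the limiting pair, independent of the smoothing, so that the log KSBA limit is forced to be the Kulikov fiber rather than some further contraction of it. A secondary technical point is bookkeeping with the finite Weyl/monodromy group quotients so that the local morphisms descend compatibly to the coarse moduli and stack levels; and one should be careful that different choices of monodromy vector $\lambda$ over the same cusp give compatible local pictures, which is guaranteed by taking $\Sigma$ fine enough to contain all of them as rays of a single fan.
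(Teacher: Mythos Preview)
Your overall instinct—use the divisorial $\lambda$-families to pin down the KSBA limit over each boundary point—is exactly the idea the paper uses, but two steps of your implementation fail.

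First, the extension principle you invoke is false: a birational map between normal proper varieties that is a morphism outside a codimension-$2$ set need \emph{not} extend to a morphism (any flop is a counterexample). So ``check along each boundary divisor and then apply normality/properness to extend'' does not yield a morphism $\overline{\mathcal{F}_{2d}}^{\Sigma}\to\overline{\mathcal{F}_{2d}}^{\mathcal{D}}$, even after you have correctly analyzed all divisorial strata. Second, the Kulikov fiber $(\pi^{-1}(s),c\,\mathcal{D}|_{\pi^{-1}(s)})$ is \emph{not} the KSBA stable pair: $c\,\mathcal{D}$ is typically trivial on many components of $\pi^{-1}(s)$, so $K+c\,\mathcal{D}$ is not ample there. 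The actual KSBA limit is the contraction $\mathrm{Proj}\bigoplus_{m\ge 0}H^0(X_p,mD_p)$. Hence your assertion that $\overline{\mathcal{F}_{2d}}^{\mathcal{D}}$ is locally \emph{isomorphic} to $\overline{\mathcal{F}_{2d}}^{\lambda}$ is too strong; at best there is a morphism, which may well collapse $\Delta_\lambda$. Recognizability guarantees the limit \emph{divisor} on the Kulikov model is well-defined, hence that this $\mathrm{Proj}$ is well-defined—not that it equals the Kulikov model.

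The paper repairs both issues simultaneously by arguing by contradiction through a resolution of indeterminacy
\[
\overline{\mathcal{F}_{2d}}^{\Sigma}\xleftarrow{\;f\;}\overline{\mathcal{F}_{2d}}^{\mathrm{mid}}\xrightarrow{\;g\;}\overline{\mathcal{F}_{2d}}^{\mathcal{D}}.
\]
If the rational map were not already a morphism, take an $f$-exceptional divisor $E$ with center $H\subset\overline{\mathcal{F}_{2d}}^{\Sigma}$. By subdividing $\Sigma$ one may arrange that $H$ meets the open stratum of some boundary divisor $\Delta_\lambda$; this subdivision step is precisely what replaces your flawed ``extend from codimension one'' argument. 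For a general $p\in H\cap\Delta_\lambda$, the fiber $f^{-1}(p)$ is positive-dimensional and so is $Z:=g(f^{-1}(p))$, hence $Z$ parametrizes a nontrivial family of KSBA pairs. But any arc from the interior to a point of $f^{-1}(p)$ projects via $f$ to an arc approaching $p$, and the divisorial $\lambda$-family over $S_\lambda$ forces its KSBA limit to be the single surface $\mathrm{Proj}\bigoplus_{m}\mathcal{O}_{X_p}(mD_p)$, contradicting $\dim Z>0$.
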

In original \cite[especially \S 8]{AE}, the above claim is proven by using 
Torelli type theorem (\cite{Fri.recent}) for log Calabi-Yau surfaces, stratification of 
log KSBA moduli, discussion of combinatorial type of the stable pairs etc. 
Here, we give a slight modification of their proof, without using them, as follows. 
\begin{proof}
For the birational map between an arbitrary toroidal compactification  $\overline{\mathcal{F}_{2d}}^{\Sigma}$
and the log KSBA compactification, 
take a resolution of indeterminancy 
$\overline{\mathcal{F}_{2d}}^{\Sigma} \xleftarrow{f} \overline{\mathcal{F}_{2d}}^{\rm mid} \xrightarrow{g} 
\overline{\mathcal{F}_{2d}}^{\mathcal{D}}$ with normal $ \overline{\mathcal{F}_{2d}}^{\rm mid}$. 


Next, take a $f$-exceptional divisor $E$ and its center $H$ (as a closed subset) in 
$\overline{\mathcal{F}_{2d}}^{\Sigma}$. By further subdivision of $\Sigma$ if necessary, 
we can and do assume $H$ intersect with the open strata of some divisor. Indeed, it is possible by the following reason. 
First, we replace $\Sigma$ to a regular subdivision. Then, if $H$ lies in higher codimensional strata $O$ which is the intersection of 
$\mathbb{Q}$-Cartier divisors $D_i$, there is a corresponding ray $\R_{\ge 0}v$ in the corresponding open cone $C(F)$ of $U(F)\otimes \R$ (
notation following \cite[Chapter III]{AMRT}). Then a regular subdivision of $\Sigma$ including $\R_{\ge 0}v$ has the desired property. 

Now, we take a general point $p$ of $H$ lying in the (divisorial) open toric strata, 
which we still denote as $O$, and consider $f^{-1}(p)$ in $E$. We denote the monodromy $\lambda$ 
corresponding to $O$. 
Note $f^{-1}(p)$ has positive dimension and its image $g(f^{-1}(p))=:Z$ by $g$ has still positive dimension. Otherwise, it contradicts to the construction of 
$f, g$. Therefore, $Z$ parametrizes (at least set-theoritically) a nontrivial family of log KSBA pairs. 

On the other hand, consider the family of 
Claim \ref{kul} \eqref{kul2} 
and its fiber $(X_p,D_p:=\mathcal{X}|_{X_p})$ over $p$. 
Since the corresponding log KSBA pair should be simply ${\rm Proj}\oplus_{m\ge 0}\mathcal{O}_{X_p}(mD_p)$, 
it contradicts to that $\dim(Z)>0$. This finishes the proof of 
Claim \ref{K.A}. 
\end{proof} 

\vspace{2mm}
The remained last step is to show that a compactification of $F_{2d}$ 
which lies between a toroidal compactification and the Satake-Baily-Borel compactification 
(SBB compactification, for short)
is a semitoric compactification. 
It is originally proved in \cite[Theorem 7.18]{AE}. 
The main point of their original proof was 
that normal image of morphism from toric variety (resp., abelian variety) 
 is still toric (resp., abelian variety) and to apply it to 
 the fibers over each boundary component. \end{Step}
 
 Our following Theorem \ref{Loo.bir} gives a stronger claim, which also leads to a 
 different proof of \cite[Main theorem 1.2]{AE} (see Cor.\ \ref{Loo.bir2}, Figure \ref{AE.Od}). 
 
 Indeed, if a normal compactification $\overline{F_{2d}}$ of $F_{2d}$ lies between 
 a toroidal compactification and the SBB compactification, 
 if we consider the closure of modular branch divisor $\overline{B_i} \subset \overline{F_{2d}}$ 
 of degree $d_i$, it follows from \cite[3.4, \S4]{Mum77} that $(\overline{F_{2d}},\sum_i \frac{d_{i}-1}{d_i}\overline{B_i})$ 
 is log crepant to the SBB compactification with the strict transform of the same boundary, hence a log canonical minimal model. 
 Therefore, we can apply the following Theorem \ref{Loo.bir}. 
\end{proof}


\section{Toroidal and semitoroidal compactifications via MMP}\label{sec.semitoric}

The following theorem characterizes the toroidal compactification \cite{AMRT} and semitoric compactification \cite{Loo1, Loo2, Loo3, Loo4} 
in the language of the minimal model program (MMP) and log discrepancies. We expect this bridge gives various applications, going from 
one side to the other which we dare not to expand too much here. The easier direction of the result is known by \cite[3.4, \S4]{Mum77} (cf., also \cite[3.2-3.5]
{Ale1}) so our main point is to show the other harder direction and applications. 

\begin{Thm}[Semitoric and toroidal compactifications via MMP]\label{Loo.bir}
Following the  notation of \cite{AMRT}, \cite[Chapter 2]{OO}, 
let $D$ be a Hermitian symmetric space $G/K$, where $G$ is a simple Lie group and $K$ 
is a compact real form. For an algebraic group $\mathbb{G}$ over $\mathbb{Q}$ such that 
$G=\mathbb{G}(\R)$, let $\Gamma$ be any arithmetic discrete group and 
$X=\Gamma\backslash D$ be the associated locally Hermitian symmetric space. 
We also consider the branch prime divisors $B_i\subset X$ 
of $D\to \Gamma\backslash D$ with branch degree $d_i$. 

Consider a normal projective compactification $\overline{X}\supset X$. 
We denote the closure of $B_i$ inside it as $\overline{B_i}$ for each $i$, 
and the sum of boundary prime divisor (with all coefficients $1$ as) $\Delta$. 
\begin{enumerate} 
\item \label{Lb1}
Suppose $\Gamma$ is neat. Then 
$X\subset \overline{X}$ is toroidal (projective) compactification \cite{AMRT} 
with respect to a regular fan $\Sigma$ if and only if 
$(\overline{X},\Delta)$ is 
divisorially log terminal 
$\mathbb{Q}$-factorial (good)\footnote{goodness follows from the proof and assertion. Same for below \eqref{Lb2}, \eqref{Lb3}}
log minimal model. 
Here, we call a fan $\Sigma$ regular if and only if each cone $\sigma (\subset C(F))$ of $\Sigma$ 
is spanned by a part of an integral basis of $U(F)_{\mathbb{Z}}$ (again following the notation of \cite{AMRT}). 

\item \label{Lb2}
Here, $\Gamma$ is not necessarily neat. Then 
$X\subset \overline{X}$ is toroidal (projective) compactification \cite{AMRT} 
with respect to a regular fan $\Sigma$ only if 
$(\overline{X},\sum_i \frac{d_i -1}{d_i} \overline{B_i}+\Delta)$ is 
quasi-divisorially log terminal 
$\mathbb{Q}$-factorial (good) log minimal model. 

\item \label{Lb3}
Here again, $\Gamma$ is not necessarily neat but 
assume $X$ is unitary type or orthogonal type (so that 
semitoric theory works for sure.) 
 Then 
$X\subset \overline{X}$ is semitoric (projective) compactification \cite{Loo3, Loo4} 
with respect to a semi-fan $\Sigma$ if and only if 
$(\overline{X},\sum_i \frac{d_i -1}{d_i} \overline{B_i}+\Delta)$ is 
log canonical (good) log minimal model. 
\end{enumerate}
Also, for general non-projective proper analytic compactifications $\overline{X}\supset X$, 
if parts of all the above claims \eqref{Lb1}, \eqref{Lb2}, \eqref{Lb3} hold, as far as 
log minimality does not require projectivity. 
\end{Thm}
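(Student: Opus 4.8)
The plan is to establish the three "only if"/"if and only if" assertions by a common bridge: relating the log discrepancies of $(\overline{X},\Delta)$ (resp.\ with the fractional branch corrections) to the combinatorial data of the (semi-)fan, and then invoking the (log) minimal model program to upgrade "log crepant to the Satake--Baily--Borel (SBB) compactification" into "log minimal model" and back. The easy direction for each part is essentially \cite[3.4, \S4]{Mum77}: over the SBB compactification $\overline{X}^{\mathrm{SBB}}$, the toroidal or semitoric model $\overline{X}^{\Sigma}$ is projective over $\overline{X}^{\mathrm{SBB}}$, and Mumford's computation of discrepancies of $K_{\overline{X}^{\Sigma}}+\Delta$ (and, in the non-neat case, of $K_{\overline{X}^{\Sigma}}+\sum_i\frac{d_i-1}{d_i}\overline{B_i}+\Delta$, using the local Weyl-group quotient description of the branch divisors) shows that the pair is log crepant over the pair on $\overline{X}^{\mathrm{SBB}}$ with the strict transform boundary. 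Since $K_{\overline{X}^{\mathrm{SBB}}}+(\text{boundary})$ is ample (Mumford; in fact it is a power of the Hodge/automorphic line bundle), its pullback is nef and the relevant pair is a log minimal model over $\overline{X}^{\mathrm{SBB}}$, hence a log minimal model; the singularity type (dlt / qdlt / lc, and $\mathbb{Q}$-factoriality for the first two) is read off directly from regularity of $\Sigma$ for \eqref{Lb1}, \eqref{Lb2}, and from the semifan condition for \eqref{Lb3}. "Goodness" (semiampleness of $K+\Delta$ on the model, equivalently that the log canonical ring is finitely generated with the expected Proj) follows because $K+\Delta$ is the pullback of an ample class, so the log canonical model is exactly $\overline{X}^{\mathrm{SBB}}$.

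For the hard direction, suppose $\overline{X}$ is a normal projective compactification such that $(\overline{X},\Delta')$ — with $\Delta'$ equal to $\Delta$ in case \eqref{Lb1}, or $\sum_i\frac{d_i-1}{d_i}\overline{B_i}+\Delta$ in \eqref{Lb2}, \eqref{Lb3} — is a $\mathbb{Q}$-factorial dlt (resp.\ qdlt, resp.\ lc) log minimal model. First I would show that $K_{\overline{X}}+\Delta'$ is semiample with Iitaka model $\overline{X}^{\mathrm{SBB}}$: the section ring is the ring of automorphic forms (by Borel--Baily--Borel / the identification of $K+\Delta'$ on any reasonable compactification with the Hodge class plus boundary), so the log canonical model is forced to be $\overline{X}^{\mathrm{SBB}}$ and we get a morphism $\overline{X}\to\overline{X}^{\mathrm{SBB}}$. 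Next, both $\overline{X}$ and any chosen toroidal $\overline{X}^{\Sigma_0}$ are birational over $\overline{X}^{\mathrm{SBB}}$ and both have $K+\Delta'$ the pullback of the ample class on $\overline{X}^{\mathrm{SBB}}$ — i.e.\ both are relative log minimal models over $\overline{X}^{\mathrm{SBB}}$ — so by the standard negativity-lemma argument they have the same discrepancies along every divisor over $\overline{X}^{\mathrm{SBB}}$. In particular $\overline{X}$ extracts exactly those divisors with discrepancy in the prescribed range, and one translates "discrepancy data matching that of a regular-fan toroidal (resp.\ semifan semitoric) model" into the existence of a (semi-)fan $\Sigma$ with $\overline{X}\cong\overline{X}^{\Sigma}$. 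Here I would lean on the local analytic structure near the boundary: étale-locally over a cusp, $\overline{X}^{\mathrm{SBB}}$ looks like the cone over an abelian scheme and the toroidal/semitoric models are literally (relative) torus embeddings / Looijenga's semitoric charts, so "birational model over the cone, with controlled discrepancies, and $\mathbb{Q}$-factorial dlt" pins down a fan by the toric dictionary, while the lc-only condition in \eqref{Lb3} is exactly what allows the non-polyhedral (arc-like) walls of a semifan.

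I expect the main obstacle to be the hard direction of \eqref{Lb3}: the semitoric case genuinely leaves the world of toric geometry — Looijenga's semifans are not locally polyhedral along the "tube" directions — so the combinatorial dictionary "lc $\mathbb{Q}$-factorial-free log minimal model over the cusp $\Leftrightarrow$ semifan" is not a formal consequence of toric MMP and has to be set up by hand, reconciling the MMP notion of lc centers with Looijenga's notion of admissible decomposition of the rational closure of the cone. A secondary technical point is the correct bookkeeping of the fractional branch divisors $\sum_i\frac{d_i-1}{d_i}\overline{B_i}$ in the non-neat case: one must check that passing to a neat subgroup, applying \eqref{Lb1}, and descending along the finite quotient really does produce the stated discrepancies (this is where qdlt rather than dlt appears), and that the log-crepant comparison with $\overline{X}^{\mathrm{SBB}}$ survives the quotient — for orthogonal type this is where \cite{Mum77}, \cite{Ale1} and the explicit branch-divisor computation are doing the real work. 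Finally, for the last sentence of the statement, I would remark that nothing above used projectivity except to invoke finite generation / existence of the log canonical model; for general proper analytic $\overline{X}$ one instead takes "log minimal model" to mean nefness of $K+\Delta'$ without a contraction, and the negativity-lemma comparison of discrepancies over the (algebraic) SBB compactification goes through verbatim, giving the claimed equivalences "as far as log minimality does not require projectivity."
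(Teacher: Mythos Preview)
Your outline correctly identifies the easy direction as following from \cite{Mum77}, and your reduction of the non-neat cases \eqref{Lb2}, \eqref{Lb3} to the neat case via finite log-crepant covers matches the paper. The gap is in the hard direction of \eqref{Lb1}, where your proposal diverges from the paper's argument and does not close.

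A minor point first: you claim that $K_{\overline{X}}+\Delta'$ is semiample with Iitaka model $\overline{X}^{\mathrm{SBB}}$ by identifying the section ring with the ring of automorphic forms. The paper explicitly notes that the birational map $\overline{X}\dashrightarrow\overline{X}^{\mathrm{SBB}}$ is \emph{a priori} not known to be a morphism (for lack of log canonical abundance), and the proof proceeds without establishing this directly; it falls out only at the end.

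The substantive gap is your step ``same discrepancies as some toroidal model $\Rightarrow$ is (semi)toroidal via the local toric dictionary.'' Discrepancy data alone does not pin down the model: two toroidal compactifications for fans with the same rays but different higher-dimensional cones are both log-crepant over $\overline{X}^{\mathrm{SBB}}$ and differ by flops, so ``controlled discrepancies $+$ $\mathbb{Q}$-factorial dlt'' does not by itself select a fan. The paper's mechanism is specific and different from what you sketch. From each boundary prime divisor $E$ of $\overline{X}$ one first extracts a ray $l_E\subset C(F)$ (using $a(E;\overline{X}^{\mathrm{SBB}})=-1$ via the negativity lemma, together with \cite[3.5]{ops}); the \emph{dual complex} of the boundary of $\overline{X}$ then assembles these rays into a candidate regular fan $\Sigma'$, yielding a small birational map $\overline{X}\dashrightarrow\overline{X}^{\Sigma'}$ that preserves all generic points of lc centers. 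The key external input is Hashizume's theorem \cite[1.1]{Has19}, which connects two such $\mathbb{Q}$-factorial log minimal models by a finite sequence of flops followed by an extremal contraction. Finally, a backward induction along this flop chain --- using that each boundary prime divisor of a toroidal compactification (for neat $\Gamma$) is a toric-variety fibration over an abelian-variety fibration over a smaller locally symmetric space, so that every flopped contraction restricted to the boundary is fiberwise toric and locally trivial --- shows that every model in the chain, hence $\overline{X}$ itself, is toroidal. Regularity of the resulting fan then comes from the dlt hypothesis. This flop-chain argument via \cite{Has19}, and in particular the induction step exploiting the toric--abelian--locally-symmetric fibration structure of the boundary, is the missing ingredient in your proposal; your ``local toric dictionary over a cusp'' does not supply it.
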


We apply the above characterization to weak K-moduli of K-trivial or Calabi-Yau (in the most generalized sense) varieties. 
In \cite{logmin}, we defined weak K-moduli to be 
a Deligne-Mumford stack but later in our \S \ref{Artin.sec}, we 
discuss general Artin stack setup as suggested by Y.Liu. 
Combining the previous work \cite{logmin} and the above \ref{Loo.bir}, we have our desired 
moduli-theoretic implication: 

\begin{Cor}[{a variant of \cite[Theorem 1.2]{AE}}]\label{Loo.bir2}
Consider a K-moduli (cf. \cite{logmin}) $M$ 
of (log-terminal) K-trivial polarized varieties, which has an isomorphism as a period map with an Hermitian locally symmetric space $M\simeq \Gamma \backslash D$ 
of orthogonal type or unitary type (\cite{Loo3, Loo4}). 
Take a weak K-moduli compactification of $M$ and 
denote its normalization as $M\subset \overline{M}$. 
Under the same notation as \eqref{Loo.bir} as above, suppose further 
\begin{enumerate}
\item $(\overline{M},\sum_i \frac{d_i -1}{d_i}\overline{B_i})$ is log canonical. 
\item \label{vmhs} On the open part of a finite cover of each of its lc centers, there is an admissible 
variation of mixed Hodge structures. 
\end{enumerate}
Then, there is a semi-fan $\Sigma$ such that $\overline{M}$ is isomorphic to the associated 
semitoric compactification of Looijenga $\overline{M}^{\Sigma}$. 
\end{Cor}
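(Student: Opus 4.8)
The plan is to deduce the corollary from Theorem \ref{Loo.bir}\eqref{Lb3}, using the previous work \cite{logmin} to verify that hypothesis's input: namely that $(\overline{M},\,\sum_i \frac{d_i-1}{d_i}\overline{B_i}+\Delta)$ — where $\Delta:=\overline{M}\setminus M$ is taken with all coefficients $1$ — is a log canonical good log minimal model in the sense of Theorem \ref{Loo.bir}. Granting that, since by assumption $M\simeq\Gamma\backslash D$ is of orthogonal or unitary type (so Looijenga's semitoric machinery \cite{Loo3, Loo4} applies), Theorem \ref{Loo.bir}\eqref{Lb3} immediately produces the semi-fan $\Sigma$ and the isomorphism $\overline{M}\cong\overline{M}^{\Sigma}$. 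So the whole content of the corollary is concentrated in checking the log-minimal-model hypothesis.

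First I would record the formal data. By definition of weak K-moduli, the compactified moduli stack is proper with projective coarse space (\cite{Fjn, Kov}), and $\overline{M}$, being its normalization, is a normal projective compactification of $M$; the branch divisors $B_i$ with branch degrees $d_i$ are intrinsic to the orbifold structure of $\Gamma\backslash D$ coming from the local Weyl/stabiliser quotients, so the boundary $\mathbb{Q}$-divisor $\sum_i \frac{d_i-1}{d_i}\overline{B_i}+\Delta$ is unambiguously defined on $\overline{M}$. Hypothesis (1) gives log canonicity of $(\overline{M},\sum_i \frac{d_i-1}{d_i}\overline{B_i})$; the first real step is to upgrade this to log canonicity of $(\overline{M},\sum_i \frac{d_i-1}{d_i}\overline{B_i}+\Delta)$. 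This should follow, as in \cite{logmin}, from the fact that the degenerate fibres parametrised by $\Delta$ are semi-log-canonical K-trivial — hence Du Bois — which bounds the singularities of the moduli transversally to $\Delta$; one has to be a little careful that the crepancy bookkeeping of the $\frac{d_i-1}{d_i}$ coefficients matches the computation of \cite[3.4, \S4]{Mum77} exactly along the branch locus, but this is routine.

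The second and decisive step is to show $K_{\overline{M}}+\sum_i \frac{d_i-1}{d_i}\overline{B_i}+\Delta$ is nef and, in fact, semi-ample (this semi-ampleness is what "good" buys, and what makes the object comparable with the Baily–Borel polarisation). This is precisely where hypothesis (2) enters: the admissible variation of mixed Hodge structures on the open part of a finite cover of each lc centre allows one to run the argument of \cite[\S3]{logmin}, which combines the variant of the cone theorem \cite{Sva} with the period/VMHS data to identify $K_{\overline{M}}+\sum_i \frac{d_i-1}{d_i}\overline{B_i}+\Delta$, up to a positive rational multiple, with the (semi-ample) pullback of the Baily–Borel polarisation. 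I should note that \cite{logmin} is phrased for Deligne–Mumford stacks; since the corollary concerns only the normalised coarse space $\overline{M}$, the Artin-stack refinement discussed in \S\ref{Artin.sec} is not strictly required here, though I would cross-check that the normalization step does not disturb the boundary identifications. Once semi-ampleness is in hand, $(\overline{M},\sum_i \frac{d_i-1}{d_i}\overline{B_i}+\Delta)$ is the log canonical good log minimal model required by Theorem \ref{Loo.bir}\eqref{Lb3}, and we conclude.

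The main obstacle I expect is this second step — extracting nef-ness, and especially semi-ampleness, of the log canonical divisor from the purely local Hodge-theoretic hypothesis on lc centres. Concretely, one must globalise the VMHS data into control of the Iitaka fibration of $K_{\overline{M}}+\sum_i \frac{d_i-1}{d_i}\overline{B_i}+\Delta$ over the Baily–Borel compactification, and this is exactly the technical heart of \cite{logmin} (and of the part of \cite{AE} it is meant to replace), where the analytic inputs — admissible VMHS together with the cone theorem variant \cite{Sva} — genuinely cannot be bypassed. A secondary, more clerical difficulty is reconciling the three sets of conventions in play — the K-moduli normalization, Looijenga's semi-fans \cite{Loo3, Loo4}, and the combinatorial data $C(F)\subset U(F)_{\mathbb{R}}$ of \cite{AMRT} — so that the object called a "log minimal model" on the MMP side is literally the semitoric compactification on the other side, as asserted.
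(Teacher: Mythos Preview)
Your proposal is correct and follows essentially the same route as the paper: reduce to Theorem \ref{Loo.bir}\eqref{Lb3} by invoking \cite[Theorem 3.2 and part of 3.6]{logmin} to obtain log minimality from the two hypotheses, with projectivity coming from \cite{Fjn, Kov} (the paper secures the latter by first inserting a general $\mathbb{Q}$-divisor of small coefficient to recast $\overline{M}$ as a log KSBA moduli). One minor over-complication: you treat semi-ampleness (``goodness'') as a separate verification, but the footnote to Theorem \ref{Loo.bir} records that goodness is automatic once the log-minimal-model condition holds, so only nefness needs to be extracted from \cite{logmin}.
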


Our assumption that the uniformizing symmetric space is of orthogonal type or unitary type should be, I believe, redundant after all. 
We put the assumption to make sure the details of the 
theory of semitoric compactifications works (\cite{Loo3, Loo4}) 
as well as \cite[7.18]{AE}, \cite[(part of) 3.23]{AEH} works. 

Note that in particular 
Corollary \ref{Loo.bir2} provides another approach and extends the main feature of 
Alexeev-Engel theorem \cite[1.2]{AE}. Indeed, the existence of their (divisor) 
$\lambda$-families implies the 
condition \eqref{vmhs}. 

\begin{proof}[proof of Corollary \ref{Loo.bir2}, assuming \ref{Loo.bir}]
If we put a general enough family of $\Q$-divisors in the polarization to the total space of the family, 
with small enough coefficients, 
we can regard $\overline{M}$ as a log KSBA moduli so that its projectivity follows from 
\cite{Kov, Fjn}. Thus 
the log minimality of $(\overline{M},\sum_i \frac{d_i -1}{d_i}\overline{B_i})$ 
follows from the above two conditions 
due to  \cite[Theorem 3.2 and part of 3.6]{logmin}. 
Hence the assertion follows from Theorem \ref{Loo.bir} \eqref{Lb3}. 
\end{proof}

\begin{proof}[proof of Theorem \ref{Loo.bir}]

Firstly, we can assume $D$ is 
not the upper half space $\mathbb{H}$, as otherwise the assertion clearly holds. 
This implies that the boundary of the Satake-Baily-Borel compactification does not 
contain prime divisors. 
Further, for general $\Gamma$, we can reduce the proof to the case when $\Gamma$ is neat i.e., 
reducing \eqref{Lb2} to \eqref{Lb1} and reducing \eqref{Lb3} to that of case when $\Gamma$ is neat 
after replacing it by a finite subgroup (cf., e.g., \cite[Chapter III, \S 7]{AMRT}). 
Indeed, by such a replacement, the concerned log pairs just become finite log-crepant covers. 
Hence, we can and do assume that $\Gamma$ is neat. 

Recall that the Satake-Baily-Borel compactification $\overline{X}^{\rm SBB}$ 
is a log canonical model (as $D$ is not $\mathbb{H}$) again by \cite[3.4, \S 4]{Mum77}. 
We denote the natural birational map from $\overline{X}$ as 
$\varphi\colon \overline{X}\dashrightarrow \overline{X}^{\rm SBB}$. 
A priori, it could be not morphism due to lack of log canonical 
abundance theorem in general, but later in this proof we shall confirm this is actually a morphism. 

Take a boundary prime divisor of $X\subset \overline{X}$ as $E$. 
Then, it follows that the discrepancy 
$a(E; \overline{X}^{\rm SBB})$ 
is not more than $-1$, hence we have 
$a(E; \overline{X}^{\rm SBB})=-1$. 
Indeed, otherwise if we pass to the 
common resolution of $\overline{X}$ and $\overline{X}^{\rm SBB}$ we get a contradiction to the 
negativity lemma \cite[3.39]{KM}. 

Now we consider the toroidal compactification $\overline{X}^{\Sigma}$ 
 of $X$ for an arbitrarily chosen fan $\Sigma$. Then from $a(E; \overline{X}^{\rm SBB})=-1$ and 
 log-crepant-ness of $(\overline{X}^{\Sigma}, \Delta) 
 \to \overline{X}^{\rm SBB}$ 
 (\cite[\S 3]{MumHir}), 
 we have 
 $a(E; (\overline{X}^{\Sigma}, \Delta))=-1$. 
Since the boundary $\Delta$ of $\overline{X}^{\Sigma}$ is toroidal, 
each $E$ 
appears as a prime divisors of the toroidal compactificaftion 
for {\it some} fan and is corresponding to some ray $l_E\subset C(F)$ 
(\cite[3.5]{ops}). Further, we take the dual complex of the boundary of 
$\overline{X}$ which gives rise to a regular fan  
$\Sigma'$ whose set of rays modulo $\Gamma$ coincides with $\{l_E\}$. 
From the construction, we have a 
small birational morphism $\overline{X}\dashrightarrow 
\overline{X}^{\Sigma'}$ which preserves all the generic points of 
lc centers i.e., satisfying the second assumption of \cite[1.1]{Has19}. 
Therefore, we can apply {\it loc.cit} Theorem 1.1 to have a series of 
flops (log flips) $\overline{X}=X\xrightarrow{\varphi_0}\overline{X}_1
\xrightarrow{\varphi_1}\overline{X}_2 \cdots 
 \overline{X}_l=\overline{X}'$ followed by 
 an extremal contraction $\overline{X}' \to 
 \overline{X}^{\Sigma'}$. This 
connects $\overline{X}$ and $\overline{X}^{\Sigma'}$. Note that 
small extraction $f$ of {\it loc.cit} Theorem 1.1 is identity in our case because of the 
$\mathbb{Q}$-factoriality assumption on our $\overline{X}$ by \eqref{Lb3}. 
Similarly, $f'$ of {\it loc.cit} Theorem 1.1 is also identity since 
our $\Sigma'$ is regular so that $\overline{X}'$ is smooth (and hence 
$\mathbb{Q}$-factorial). 

We show
\footnote{There are two ways to complete logics from here. The other is to 
see the construction of $\overline{X}_i$ as log flips in \cite[\S 3]{Has19} and then 
confirm the open subset $U$ (in {\it loc.cit}) 
is preserved during the flops. Then we get immediate conclusion that 
$\overline{X}$ coincides with $\overline{X}^{\Sigma'}$. We thank 
K.Hashizume for the confirmation.}
 each $\overline{X}_i$ is toroidal compactification by induction on $l-i=j$ i.e., 
in a backword direction. 
The start point i.e., $\overline{X}_l$ is assumed to be 
a toroidal compactification $\overline{X}^{\Sigma'}$. 
Consider the flop $\overline{X}_i\dashrightarrow \overline{X}_{i+1}$ and suppose it is a 
log flip with the $\mathbb{Q}$-divisor $D'_i$ of $\overline{X}_i$. We denote the 
strict transform of $D'_i$ in $\overline{X}'_{i+1}$ as $D_i$. 
From \cite[Chapter III]{AMRT} and our assumption by induction that $\overline{X}_{i+1}$ is 
a toroidal compactification (for a neat discrete subgroup), it follows that each boundary prime divisor of 
it is a toric variety-fibration over an abelian variety fibration over another 
locally Hermitian symmetric space, say $(\Gamma\cap N(F))\backslash F$ 
after the notation of \cite[Chapter III]{AMRT}. 
The flopped contraction $\overline{X}_{i+1}\to V_i$ (cf., \cite[1.1]{Has19}) restricted to 
any boundary prime divisor is birational and 
must be fiberwise, i.e., preserving the existence of a morphism to 
$(\Gamma\cap N(F))\backslash F$, 
 and curves contracted by it is numerically 
determined i.e., as those orthogonal to $D_i$. 
Recall that every curve of a toric variety is algebraically equivalent to a torus-invariant $1$-cycle. 
Hence the flopped contraction is fiberwise toric and locally trivial, 
so that $\overline{X}_i$ is again a toroidal compactification. 
Therefore, by induction, it follows that $\overline{X}$ is a toroidal compactification. 
From the dlt condition (cf., \cite[3.9]{Fjn.lt}) 
together with neatness of $\Gamma$, it follows that it corresponds to 
a regular fan. 
\end{proof}

\begin{Ques}
Do the integral models of Shimura varieties as constructed in 
\cite{FC, Lan, Vasiu, Kisin} satisfy (relative) log minimality in the sense of 
Theorems \ref{Loo.bir}, \ref{Loo.bir2}? 
\end{Ques}

\begin{Rem}
Many of the above integral models $\mathcal{S}$ 
of Shimura varieties $S$, say over the ring of integers 
$\mathcal{O}$, 
satisfy a universality on extendability that 
any smooth scheme $\mathcal{T}$ over $\mathcal{O}$ and a morphism from 
the generic fiber 
$\mathcal{T}_{\rm Frac{\mathcal{O}}}$ 
to the Shimura variety $S$ extends to a morphism from 
$\mathcal{T}$ to $\mathcal{S}$. Note that this condition is of much stronger sort than scheme to be 
the complement of boundary of log minimal models whose  boundary coefficients are all $1$. 
For instance, take a projective equisingular family of minimal surfaces with $A_1$-singularities, 
which we simultaneously resolve. Any elementary transform of the union of exceptional divisors 
violates the extendability universality. 
\end{Rem}


\section{Allowing continuous isotropy -examples}\label{Artin.sec}

In Theorem \ref{Loo.bir2}, we discussed the structure of 
weak K-moduli which are {\it Deligne-Mumford} stacks, hence in particular 
only finite isotropy groups are allowed. 
A natural question is what happens if we allow continous isotropy to the 
weak K-moduli i.e., allow them to be Artin stacks. 

\begin{Conj}\label{weak.Artin}
For any weak K-moduli {\it Artin stack} of polarized irreducible holomorphic symplectic varieties or polarized abelian varieties or 
polarized Enriques manifolds (\cite{OS}), 
the normalization of the coarse moduli space (if it exists) is always a semitoric compactification. 
\end{Conj}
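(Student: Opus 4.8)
The plan is to deduce Conjecture \ref{weak.Artin} from Theorem \ref{Loo.bir}\eqref{Lb3} along the lines of Corollary \ref{Loo.bir2}, the one genuinely new ingredient being an upgrade of the log-minimality mechanism of \cite{logmin} from Deligne--Mumford to Artin stacks with reductive, possibly positive-dimensional, isotropy. Write $\M$ for the given weak K-moduli Artin stack, $\overline{M}$ for the normalization of its good moduli space (whose existence is hypothesized), and $M\subset\overline{M}$ for the open locus, which is identified by a period map with a locally Hermitian symmetric space $\Gamma\backslash D$ of the relevant type. The objects parametrized by the boundary $\overline{M}\setminus M$ are polarized slc K-trivial varieties, hence DuBois, so $\overline{M}$ dominates the Satake--Baily--Borel compactification $\overline{M}^{\rm SBB}$; combined with the negativity-lemma computation in the proof of Theorem \ref{Loo.bir} (which forces $a(E;\overline{M}^{\rm SBB})=-1$ for every boundary prime divisor $E$), this gives that $(\overline{M},\sum_i\frac{d_i-1}{d_i}\overline{B_i})$ is log canonical, so the first hypothesis of Corollary \ref{Loo.bir2} is automatic in this situation.

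The substance is then the second hypothesis \eqref{vmhs} together with the resulting log minimality. The point is that $K_{\overline{M}}+\sum_i\frac{d_i-1}{d_i}\overline{B_i}$ should be the pull-back under $\varphi\colon\overline{M}\dashrightarrow\overline{M}^{\rm SBB}$ of the Baily--Borel ample $\Q$-line bundle (the Hodge, or $\lambda$, class), which is nef; hence the pair is a log canonical log minimal model once $\varphi$ is shown to be a morphism. For Deligne--Mumford weak K-moduli this is \cite{logmin}, and its proof passes through (i) existence of a separated proper good moduli space with projective coarse space, and (ii) the identification of the log canonical divisor with the Baily--Borel class via admissible variations of mixed Hodge structures on the lc centers. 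Step (ii) is essentially insensitive to automorphisms, since the period map and the Hodge bundle descend to the good moduli space; step (i) is where the continuous isotropy genuinely enters, and I would supply it by the recent $\Theta$-reductivity and S-completeness technology for K-moduli, combined with boundedness of slc K-trivial polarized pairs, to obtain properness of the good moduli space and projectivity of its coarse space.

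For the three families in the statement the isotropy is, in each case, expected to be controllable: for polarized abelian varieties the good moduli space is Alexeev's moduli of stable semi-abelic pairs, and the conjecture specializes to a reproof of Alexeev's theorem that its normalization is the second Voronoi toroidal compactification (cf.\ \cite{Ale1}), which is semitoric; for irreducible holomorphic symplectic and Enriques manifolds the positive-dimensional stabilizers on the boundary should come from abelian-variety translation directions inside the slc degenerations, so that after a finite base change and rigidification one splits off that abelian part and reduces, stratum by stratum, to a product situation governed by the Deligne--Mumford case. With both hypotheses of Corollary \ref{Loo.bir2} in hand, Theorem \ref{Loo.bir}\eqref{Lb3} (for orthogonal or unitary type, and its analogue for the symplectic $A_g$-case) then identifies $\overline{M}$ with the semitoric compactification attached to the resulting semi-fan $\Sigma$.

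I expect the main obstacle to be exactly step (i): establishing, in the Artin-stack setting, that the good/coarse moduli space is proper and mild enough (no worse than log canonical, with the limited $\Q$-factoriality one actually needs) for the minimal-model-program input of \cite{Has19} and the log-minimality conclusion of \cite{logmin} to apply. Controlling positive-dimensional stabilizers --- S-completeness, $\Theta$-reductivity, properness, and projectivity of the coarse space --- is precisely what the existing KSBA machinery circumvents by adding a general divisor to reduce to finite automorphism groups, so a new argument is required here. A secondary difficulty is that $A_g$ is of symplectic, rather than orthogonal or unitary, type for $g\ge 3$, so one must either extend the semitoric formalism of \cite{Loo3, Loo4} to that case or argue directly that the relevant toroidal compactifications of $A_g$ are the intended semitoric objects.
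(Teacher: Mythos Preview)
The statement you are attempting to prove is labeled a \emph{Conjecture} in the paper, and the paper does not provide a proof; instead \S\ref{Artin.sec} offers several worked examples (elliptic K3's, degree $2$ K3's, hyperelliptic quartic K3's, degree $2$ Enriques surfaces) as evidence. So there is no ``paper's own proof'' to compare against: your proposal is an outline for an attack on an open problem, and should be read as such.

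On the substance, your plan has a genuine circularity in the first step. You claim that the log canonicity of $(\overline{M},\sum_i\frac{d_i-1}{d_i}\overline{B_i})$ is ``automatic'' by invoking the negativity-lemma computation from the proof of Theorem~\ref{Loo.bir}. But that computation uses, as input, that $K_{\overline{X}}+\Delta$ is \emph{nef} (i.e., that the pair is already a log minimal model) in order to compare discrepancies with $\overline{X}^{\rm SBB}$ and force $a(E;\overline{X}^{\rm SBB})=-1$. You cannot run that argument before you know log minimality, and log minimality is precisely what you are trying to establish via \cite{logmin}. Moreover, even if one knew $a(E;\overline{M}^{\rm SBB})=-1$ for each boundary prime divisor $E$ of $\overline{M}$, this alone does not yield log canonicity of $(\overline{M},\sum_i\frac{d_i-1}{d_i}\overline{B_i})$: log canonicity is a condition on \emph{all} valuations, not only on the finitely many $E$'s that happen to be divisors on $\overline{M}$. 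In Corollary~\ref{Loo.bir2} this condition is an explicit \emph{hypothesis}, not a consequence.

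You are candid about the other gaps, and they are real. The passage from \cite{logmin} to the Artin-stack setting is not a formality: the KSBA/log-KSBA mechanism that yields properness and projectivity of the coarse moduli (via \cite{Kov, Fjn}) genuinely relies on finite automorphism groups, which is exactly what the paper sidesteps by inserting a general small divisor. Your suggestion to use $\Theta$-reductivity and S-completeness is reasonable as a direction, but at present there is no reference that delivers the needed properness and projectivity for slc K-trivial polarized varieties with positive-dimensional stabilizers. Likewise, your observation that $A_g$ for $g\ge 3$ is of symplectic type, so that the semitoric formalism of \cite{Loo3,Loo4} and Theorem~\ref{Loo.bir}\eqref{Lb3} as stated do not literally apply, is a genuine obstruction that the paper itself flags (see the sentence after Corollary~\ref{Loo.bir2}). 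In short: the strategy is the natural one, but the two hypotheses of Corollary~\ref{Loo.bir2} are not yet verified in the Artin setting, and your argument for the first one is circular.
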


We provide some non-trivial examples as an evidence, 
which led us to the above question. For more details of the 
surfaces to be parametrized, we refer to the original papers for each.

\begin{Rem}

Also, we note that they are also qualitively different in the sense 
that the Shah-Looijenga's is a good moduli {\it Artin stack} 
in the sense of J.Alper 
(see also a sibling K-moduli of Del Pezzo surfaces of degree $2$ in \cite{OSS})
while the \cite{AET} is a Deligne-Mumford stack. 
Indeed, recall that \cite{Shah}'s compactification is a one 
point blow up of the natural GIT compactification of the moduli 
of sextic curves, but 
two of the four $1$-cusps are with $1$-dimensional stabilizer groups. 
Note that the partial Kirwan type blow up in \cite{Shah} 
can be done in stacky level as 
following \cite[\S 5.2]{OSS}. 

Recently, there is also a work of \cite{AET}. 
Here, we rigorously confirm that the compactification of $\mathcal{F}_{2}$ by \cite{AET} is different from the compactificaftion by 
\cite{Shah} (cf., also 
\cite{Loo4}). 
It is easy if we compare the morphisms to Baily-Borel compactification: 
the morphism from the former compactification contracts various divisors 
(corresponding to the elliptic diagrams of type e.g., 
$'A_{18}^{-}$, $D_{18}$, $'A'_{15}A_{3}$ etc in 
\cite[Table 4.2]{AET}) to the 0-cusp while 
it is small in the case of Shah-Looijenga compactification. 
\end{Rem}

\begin{Ex}
\cite{ABE} explicitly constructs a Deligne-Mumford moduli stack of 
(Weierstrass) elliptic K3 surfaces and their slc K-trivial degenerations. 
They showed the coarse moduli is a particular toroidal compactification, 
as predicted by \cite{Brun}. It is reproved (reconstructed) by 
a different method, just by Weierstrass equations without use of 
Kulikov models, in \cite[Part I]{PL} which naturally extends over 
$\mathbb{Z}[1/6]$. The method has a virtue to apply to 
a differential geometric problem of deciding all limit measures 
along type II degenerations \cite[Part II]{PL}. 

On the other hand, as \cite[Chapter 7]{OO} shows, 
the Satake-Baily-Borel compactification coincides with GIT compactification 
whose boundary still parametrizes slc K-trivial degenerations, hence weak K-moduli. 
Note that it is an Artin stack at the level of stack, hence giving another affirmative 
example to Problem \ref{weak.Artin}. 
\end{Ex}

\begin{Ex}[Degree $2$ K3 surfaces]\label{F2.ex}
Recall that \cite{Shah, Loo1, Loo4} showed that a certain 
weak K-moduli Artin stack has a coarse moduli which is 
a semitoric compactification corresponding to a simple hyperplane arrangement. 
Note that generic degree $2$ K3 surface is obtained as the double cover of 
projective plane branched along a sextic curve. Then, the 
desired weak K-moduli is obtained as blow up of the GIT moduli, 
whose exceptional divisor parametrizes double branched covers of 
$\mathbb{P}(1,1,4)$. 

For the precise construction of the moduli {\it stack}, which is missing in the original papers, 
see \cite[\S 5.2]{OSS}. {\it Loc.cit} 
explicitly determines the K-moduli {\it stack} of Del Pezzo surfaces of degree $2$. 
Note that they are similarly  double covers of 
projective plane branched along a quartic curve, 
while an exceptional divisor parametrizes doubled $\mathbb{P}(1,1,4)$ again. 
Hence, the completely same stacky refinement of the above compactification works. 
\end{Ex}

\begin{Ex}[Hyperelliptic quartic K3 surfaces]\label{F4.ex}

Recall that general hyperelliptic quartic K3 surface is a double cover of 
quartic surface $\mathbb{P}^1\times \mathbb{P}^1$ ramifying 
along a $(4,4)$-curve $D$. 
As a related recent work, 
\cite{ADL0} explicitly described log K-moduli of log Fano pairs 
$(\mathbb{P}^1\times \mathbb{P}^1, c D)$ with $c\in (0,1)$ 
and their degenerations, 
giving the K-stability perspectives to the explicit prediction of 
birational variation of compactifications by 
\cite{LO} and its partial confirmation \cite{LO.confirm} 
(also cf., \cite{Shah2}). All the appearing models in {\it op.cit} 
can be naturally considered as birational models of $F_4$. 

As a precise result, \cite[Theorem 1.2]{ADL0} together with 
\cite[Theorem 1.1 (iv)]{LO.confirm} show that 
the log K-moduli for $c=1-\epsilon$ with $|\epsilon|\ll 1$ coincides with 
the semitoric compactification corresponding to the closure of a Heegner divisor 
parametrizing degree $8$ curves of $\mathbb{P}(1,1,2)$ 
generally not passing through the vertex (cf., \cite{Loo4} for the precise meaning). 
We denote this K-moduli stack by $\overline{\mathcal{F}_4}^{\Sigma}$. 
This is, at the level of Artin stack, another example of weak K-moduli stack of 
polarized K3 surfaces whose coarse moduli is semitoric. 
\end{Ex}

We treat the following case of the moduli of  degree $2$ Enriques surface after 
\cite{Hor, Shah.Enriques, 
Sterk.I, Sterk.II} with some details because we more substantially add new viewpoints to {\it loc.cit} 
(especially Proposition \ref{ADL.SS}). 

\begin{Ex}[Enriques surfaces of degree $2$]

After Horikawa \cite{Hor}, degree $2$ polarization on Enriques surface 
gives a description as the double cover branched along $2$-anticanonical 
curves in quadric surfaces. We denote the moduli stack by  
$\mathcal{M}_{\rm Enr}$ and its coarse moduli space $M_{\rm Enr}$. 
By using the Horikawa(-Enriques)'s description, 
\cite{Shah.Enriques} proves some stable reduction theorem type result 
and \cite{Sterk.I, Sterk.II} clarified it as the valuative criterion of properness for 
certain compactification of the moduli. I am grateful to Professor S.Mukai as I have learnt a lot about this compactification from him. 
We now review \cite[\S 1]{Sterk.II} and \cite{Shah.Enriques} somewhat: 
he considers the series of quotient stacks 
\begin{align}\label{stack.morphs}
[H^{\rm ss}_2/G]\to [H^{\rm ss}_1/G]\to [H^{\rm ss}/G].
\end{align}
Here, $H$ is the closed subspace of $|\mathcal{O}_{\mathbb{P}^1\times \mathbb{P}^1}
(4,4)|$ fixed by the natural involution $I$, on 
which the algebraic group $G:={\rm Aut}(\mathbb{P}^1\times \mathbb{P}^1,I)$ acts. 
The identity component of $G$ is $2$-dimensional algebraic torus. 
The above morphisms \eqref{stack.morphs} 
induce blow ups at the level of coarse moduli spaces as follows: 
$$H^{\rm ss}_2//G \xrightarrow{\pi_2} H^{\rm ss}_1//G \xrightarrow{\pi_1} H^{\rm ss}//G.$$
We denote the exceptional divisor of $\pi_i$ as $E_i$. 
The main result of \cite{Sterk.II} are that 
\begin{itemize}
\item $H^{\rm ss}_2$ underlies a $G$-equivariant proper flat family of 
polarized Enriques surfaces of degree $2$ and their slc K-trivial 
degenerations (after \cite[\S6, \S7]{Shah.Enriques}) 
\item The projective and normal GIT quotient 
$H^{\rm ss}_2//G$ birationally 
dominates a semitoric compactification 
$\overline{M}_{\rm Enr}^{\Sigma}$, associated to the strict transform of $E_1$ 
generically parametrizing ``special" Enriques surfaces in the sense of 
Horikawa \cite{Hor} (which are also nodal Enriques surfaces in our polarized setup). 
\end{itemize}

Note that, by using multiple of the polarizations on the parametrized surfaces on $H_2$, 
there is a proper morphism to a corresponding Hilbert scheme $H_2\to {\rm Hilb}$. 
This morphism contracts the second exceptional divisor $E_2$ (i.e., that of 
$H_2\to H_1$) and we write the normalization of the image of $H_2^{\rm ss}$ as $H_3^o$, 
on which $G$ still acts. 

Then we consider the moduli {\it Artin} stack $[H_3^o/G]$ and denote 
it by $\overline{\mathcal{M}}_{\rm Enr}^{\Sigma}$ with its coarse moduli 
$\overline{M}_{\rm Enr}^{\Sigma}$. 
The notation comes from the following, which provides modern different explanation of the works of 
\cite{Shah.Enriques, Sterk.II}. 
\begin{Prop}\label{ADL.SS}
$\overline{M}_{\rm Enr}^{\Sigma}$ is the closure of $M_{\rm Enr}$ in $\overline{F_4}^{\Sigma}$ 
(cf., Example \ref{F4.ex}). 
Moreover, there is a natural closed immersion of $\overline{\mathcal{M}}_{\rm Enr}^{\Sigma}$ into 
$\overline{\mathcal{F}_4}^{\Sigma}$ which also respects the 
parametrized polarized surfaces. 
\end{Prop}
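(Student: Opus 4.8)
The plan is to identify the period map for degree $2$ Enriques surfaces with the restriction of the period map for hyperelliptic quartic K3 surfaces, and then to transport the semitoric compactification structure along this identification. First I would recall the Horikawa correspondence: a degree $2$ Enriques surface $Y$ together with its polarization is presented as a quotient $\tilde Y/\iota$ where $\tilde Y$ is a K3 surface double covering $\mathbb{P}^1\times\mathbb{P}^1$ branched along a $(4,4)$-curve invariant under the involution $I$, and $\iota$ is the Enriques involution lifting $I$. Thus the moduli stack $\mathcal{M}_{\mathrm{Enr}}$ maps to $\mathcal{F}_4$ by sending $Y\mapsto(\tilde Y,\mathcal{L})$, where $\mathcal{L}$ is the pulled-back quartic polarization; on periods this realizes $M_{\mathrm{Enr}}\simeq\Gamma'\backslash D'$ as a sub-locally-symmetric-space of $F_4\simeq\Gamma\backslash D$ cut out by the anti-invariant sublattice of the Enriques involution. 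The first step is therefore to verify, using \cite{Hor, Shah.Enriques, Sterk.I, Sterk.II}, that this gives a \emph{locally closed} immersion of stacks $\mathcal{M}_{\mathrm{Enr}}\hookrightarrow\mathcal{F}_4$ compatible with the polarized families, and to record that the relevant Heegner/Noether--Lefschetz divisor in $F_4$ along which the image sits is exactly the one (degree $8$ curves on $\mathbb{P}(1,1,2)$) used to define $\overline{F_4}^{\Sigma}$ in Example \ref{F4.ex}.

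Next I would pass to compactifications. On the side of $\overline{F_4}^{\Sigma}$: by Example \ref{F4.ex} and Theorem \ref{Loo.bir}\eqref{Lb3}, $\overline{\mathcal{F}_4}^{\Sigma}$ is the weak K-moduli (log KSBA) compactification attached to the log Fano data $(\mathbb{P}^1\times\mathbb{P}^1, cD)$ with $c=1-\epsilon$, and its normalization is semitoric for the semifan $\Sigma$. On the Enriques side: the family $H_3^o$ obtained by contracting $E_2$ from Sterk's $H_2^{\mathrm{ss}}$ carries the $(4,4)$-curves invariant under $I$, so it is literally a sub-family of the universal KSBA family over $\overline{\mathcal{F}_4}^{\Sigma}$; passing to the quotient Artin stack $[H_3^o/G]$ and its coarse space yields a finite morphism $\overline{M}_{\mathrm{Enr}}^{\Sigma}\to\overline{F_4}^{\Sigma}$ extending the locally closed immersion on the open part. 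Since source and target are normal and this morphism is birational onto its image and an immersion on a dense open set, I would conclude it is a closed immersion onto the closure of $M_{\mathrm{Enr}}$, first at the level of coarse moduli and then — using that the KSBA/weak K-moduli stacks are determined by their families — at the level of Artin stacks, with compatibility of the parametrized polarized surfaces being immediate from the construction. The semitoric description of $\overline{M}_{\mathrm{Enr}}^{\Sigma}$ stated in \cite{Sterk.II} then matches the one induced by restricting $\Sigma$, which also reconciles the two bullet points preceding the Proposition.

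The main obstacle I anticipate is matching the \emph{boundary} strata and checking that no extra collapsing or blow-up occurs along the immersion: one must confirm that Sterk's partial resolution $H_2^{\mathrm{ss}}//G$ followed by the contraction of $E_2$ produces exactly the restriction of the KSBA stable-pair degenerations on the K3 side, i.e. that the slc degenerate Enriques surfaces of \cite{Shah.Enriques} double-cover the slc degenerate quartic-polarized K3 stable pairs appearing in $\overline{\mathcal{F}_4}^{\Sigma}$, and that the branch divisor $E_1$ (special/nodal Enriques surfaces) is the trace of the corresponding K3 boundary/branch divisor. This is where the explicit GIT and stable-reduction analysis of \cite{Shah.Enriques, Sterk.I, Sterk.II} has to be reconciled, boundary component by boundary component, with the list of degenerations underlying $\overline{F_4}^{\Sigma}$ coming from \cite{ADL0, LO, LO.confirm}; once this combinatorial dictionary is in place, the normality of both sides and the density of $M_{\mathrm{Enr}}$ make the closed-immersion statement formal.
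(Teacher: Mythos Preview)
Your route is different from the paper's and, while it could in principle be completed, it runs the hard direction of the K-stability/GIT comparison and would force the boundary-by-boundary check you yourself flag as the main obstacle. You try to produce a morphism $[H_3^o/G]\to\overline{\mathcal{F}_4}^{\Sigma}$ by asserting that every object in Sterk's $H_3^o$ is log K-semistable (``literally a sub-family of the universal KSBA family''); that is the implication GIT-semistable $\Rightarrow$ K-semistable, which is not a general fact and would indeed require reconciling the explicit degeneration lists of \cite{Shah.Enriques, Sterk.II} with those of \cite{ADL0, LO.confirm}.

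The paper avoids this entirely by reversing the direction. It first takes the closure $\overline{\mathcal{M}_{\rm Enr}}^{\rm ADL}$ of $\mathcal{M}_{\rm Enr}$ inside $\overline{\mathcal{F}_4}^{\Sigma}$, and then uses the general implication log K-semistable $\Rightarrow$ GIT-semistable from \cite[3.4]{OSS} (with \cite{LP} and \cite{OS} to verify the hypotheses on the $\mathbb{P}(1,1,2)$ stratum) to obtain an \emph{open} immersion $\overline{\mathcal{M}_{\rm Enr}}^{\rm ADL}\hookrightarrow [H_3^o/G]$. Properness makes the induced map of coarse spaces an isomorphism, and Zariski-openness of K-semistability \cite{Od.open, Don.open, BLX} then upgrades this to an isomorphism of stacks. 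No combinatorial matching of boundary strata is needed; the paper only sketches a second, more explicit argument along lines closer to yours as an alternative.

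One inaccuracy worth noting: the Enriques locus in $F_4$ is ten-dimensional and is not a Heegner divisor. The Heegner divisor defining the semifan $\Sigma$ in Example~\ref{F4.ex} is the unrelated eighteen-dimensional locus of double covers of $\mathbb{P}(1,1,2)$; your first paragraph conflates the two.
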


\begin{proof}
We consider the closure of $\mathcal{M}_{\rm Enr}$ in 
the log K-moduli stack $\overline{\mathcal{F}_4}^{\Sigma}$, 
and denote it by $\overline{\mathcal{M}_{\rm Enr}}^{\rm ADL}$. 
Then, we claim the following: 
\begin{Claim}\label{op.im}
$\overline{\mathcal{M}_{\rm Enr}}^{\rm ADL}$ has an open 
immersion to $[H_3^o/G]$, respecting the parametrized surfaces. 
\end{Claim}
\begin{proof}[proof of Claim\ref{op.im}]
This can be shown as follows. 
Note that the former algebraic stack parametrizes log K-semistable (resp., -polystable, 
at the coarse moduli level) and the latter parametrizes GIT-semistable 
(resp., -polystable, at the coarse moduli level) objects, in each strata. 
By \cite[Theorem 1 and its proof]{LP}, 
$(\mathbb{P}(1,1,2), (1-\epsilon)D)$ for general $D\in |-K_X/2|$ 
is log K-semistable. Also, by \cite[6.1 (ii)]{OS}, 
it can be log K-unstable for special such $D\in |-K_X/2|$ such as a line with 
multiplicity $4$. 
After these two confirmations, we can and do apply \cite[3.4 (also cf., 3.6)]{OSS} to conclude the presence of the open 
immersion $\tilde{\iota}$ 
of 
$\overline{\mathcal{M}_{\rm Enr}}^{\rm ADL}$ (which parametrizes log K-semistable pairs) 
into $\overline{\mathcal{M}}_{\rm Enr}^{\Sigma}$ (which parametrizes GIT semistable curves in each strata), 
as well as its induced morphism $\iota$ between the coarse moduli spaces 
$\overline{M_{\rm Enr}}^{\rm ADL}$ (which parametrizes log K-polystable pairs) 
into $\overline{M}_{\rm Enr}^{\Sigma}$ (which parametrizes GIT polystable curves in each strata). 
We conclude the proof of Claim\ref{op.im}. 
\end{proof}

From the construction, both morphisms $\tilde{\iota}$ and $\iota$ respect the parametrized polarized surfaces. 
Moreover, since the above morphism between the two coarse moduli are isomorphism, 
and (log) K-semistability is a Zariski open condition \cite{Od.open, Don.open, BLX}, 
the morphism between algebraic {\it stacks} $\overline{\mathcal{M}_{\rm Enr}}^{\rm ADL}\to [H_3^o/G]$ is also isomorphism. 
Another proof goes as follows which we omit the details. 
We first directly analyze the (simple) contraction of $E_2\subset H_2^{ss}//G$, 
different from $\pi_2$ which underlies the morphism of algebraic stacks 
$[H_2^{ss}/G]\to [H_3^o/G]$. Denote the contraction as 
$\pi'\colon H_2^{ss}//G\to \overline{M_{\rm Enr}}'$. Then identify the obtained 
$\overline{M_{\rm Enr}}'$ as the semitoric compactification discussed in 
\cite{Sterk.II, Loo4}, by explicitly confirming the relative ampleness of the strict transform of $E_1$ 
over the Satake-Baily-Borel compactification. 
\end{proof}

\begin{Rem}
The above proposition gives a better understanding to \cite[Remark 4.5]{Sterk.II}. 
Indeed, we see from the above theorem that the contraction of the Shah-Sterk's compactification to semitoric compactification as claimed in 
\cite[Main Theorem, also cf., Remark 4.5]{Sterk.II} is nothing but the induced map by 
the contraction $H_2^{ss}\to H_3^o$ and hence has natural modular interpretation. 
That is, the semitoric compactification discussed in \cite{Sterk.II} parametrizes 
polarized Enriques surfaces and numerically K-trivial 
degenerations with slc (``insignificant limit") singularities. This is the reason why we discuss this example in our context. 
Explicitly speaking, this contraction contracts $E_2$ to the $8$-dimensional GIT moduli discussed in \cite[Theorem 6.3]{Shah.Enriques} 
inside the strict transform of $E_1$, 
which finally replaces the point in $E_1$ corresponding to \cite[p.489, l.23-25]{Shah.Enriques}. 
This also geometrically overcomes the problem: 
\begin{quote}
``{\it the price we have to pay (for the moment) is that all fibers ... describe the same GIT space of curves..}" (\cite[p130, l.8-9]{Sterk.thesis})
\end{quote}
\end{Rem}

\begin{Rem}
We also take this chance to write that \cite[B-6$b_1$]{Shah.Enriques} is missing 
the possibility of $L=L'$, corresponding to the 
union of two skew horizontal edges and two vertical lines which are {\it not} edges. 
Also, {\it op.cit} B-2 of p485 seems to mistake the logic. The correct 
reasoning comes from the presence of isotrivial degeneration of the curve into the 
missed curve of {\it op.cit} B-6 $b_1$. The former missed case fits to the fourth 
$0$-cusp in the setup of \cite{Sterk.I, Sterk.II} which is not depicted in 
$3$ Figures of \cite[\S 4]{Sterk.II}. 
\end{Rem}

\end{Ex}


\subsection*{Acknowledgements}
I thank V.Alexeev, P.Engel, 
K.Hashizume, S.Kondo, Y.Liu, 
E.Looijenga, and S.Mukai for helpful related discussions. 
This paper is partially an outcome of my preparation for the survey talk at 
Kinosaki symposium 2021. I therefore also thank the organizers, 
and those who suggested english translation of the proceeding. 

During this work, the author is partially supported by 
KAKENHI 18K13389 (Grant-in-Aid for Early-Career Scientists), 
KAKENHI 16H06335 (Grant-in-Aid for Scientific Research (S)), and 
KAKENHI 20H00112 (Grant-in-Aid for Scientific Research (A)). 



\vspace{5mm}
\footnotesize \noindent
{\tt yodaka@math.kyoto-u.ac.jp} \\
Department of Mathematics, Kyoto university, Kyoto, Japan \\

\end{document}